\newdimen\bibspace
\renewenvironment{thebibliography}[1]{%
 \section*{\refname %or \bibname if you use ``book'' as the documentclass
       \@mkboth{\MakeUppercase\refname}{\MakeUppercase\refname}}%
     \list{\@biblabel{\@arabic\c@enumiv}}%
          {\settowidth\labelwidth{\@biblabel{#1}}%
           \leftmargin\labelwidth
           \advance\leftmargin\labelsep
           \itemsep\bibspace
           \parsep\z@skip     %
           \@openbib@code
           \usecounter{enumiv}%
           \let\p@enumiv\@empty
           \renewcommand\theenumiv{\@arabic\c@enumiv}}%
     \sloppy\clubpenalty4000\widowpenalty4000%
     \sfcode`\.\@m}
    {\def\@noitemerr
      {\@latex@warning{Empty `thebibliography' environment}}%
     \endlist}
\newtheorem{thm}{Theorem}[section]
\newtheorem{prop}[thm]{Proposition}
\newtheorem{defn}[thm]{Definition}
\newtheorem{ex}[thm]{Example}
\newtheorem{cor}[thm]{Corollary}
\newtheorem{rem}[thm]{Remark}
\def\XXint#1#2#3{{\setbox0=\hbox{$#1{#2#3}{\int}$}
  \vcenter{\hbox{$#2#3$}}\kern-.5\wd0}}
\newcommand{\al}{\alpha}                \newcommand{\lda}{\lambda}
\newcommand{\om}{\Omega}                \newcommand{\pa}{\partial}
\newcommand{\va}{\varepsilon}           \newcommand{\ud}{\mathrm{d}}
\newcommand{\be}{\begin{equation}}      \newcommand{\ee}{\end{equation}}
\newcommand{\R}{\mathbb{R}}
\begin{document}

\title{\textbf{A Liouville theorem for solutions of degenerate Monge-Amp\`ere equations}
\bigskip}

\author{\medskip Tianling Jin \  and \
Jingang Xiong}

%\pagestyle{myheadings}
%\markright{A Liouville Theorem}

\date{}

\maketitle

\begin{abstract}
In this paper, we give a new proof of a celebrated theorem of J\"orgens which states that every classical convex solution of
\[
\det\nabla^2 u (x)=1\quad \mbox{in } \mathbb{R}^2
\]
has to be a second order polynomial. Our arguments do not use complex analysis, and can be applied to establish such Liouville type theorems for solutions of a class of degenerate Monge-Amp\`ere equations.
We prove that every convex generalized (or Alexandrov) solution of
\[
\det \nabla^2 u(x_1,x_2)=|x_1|^{\alpha} \quad \mbox{in } \mathbb{R}^2,
\]
where $\alpha>-1$, has to be
\[
u(x_1,x_2)=
\frac{a}{(\alpha+2)(\alpha+1)}|x_1|^{2+\alpha}+\frac{a b^2}{2}x_1^2 +bx_1x_2+ \frac{1}{2a} x_2^2+\ell(x_1,x_2)
\]
for some constants $a>0$, $b$ and a linear function $\ell(x_1,x_2)$.

This work is motivated by the Weyl problem with nonnegative Gauss curvature.

%2010 Mathematics Subject Classification numbers: Primary 35J96; Secondary 35J70.
\end{abstract}

\section{Introduction}
A celebrated theorem of J\"orgens states that every entire classical convex solution of
\be\label{eq:jorgens}
\det\nabla^2 u (x)=1
\ee
in $\R^2$ has to be a second order polynomial. This theorem was first proved by J\"orgens \cite{J} using complex analysis methods.
An elementary and simpler proof, which also uses complex analysis, was later
given by Nitsche \cite{N}, where Bernstein theorem for two dimensional minimal
surfaces is established as a corollary.
J\"orgens' theorem was extended to smooth convex solutions in higher dimensions by Calabi \cite{C} for
dimension $\le 5$ and by Pogorelov \cite{P} for all dimensions.  Another proof was given by Cheng and Yau \cite{CY}  along the lines of affine
geometry. Note that each local
generalized solution of \eqref{eq:jorgens} in dimension two is smooth, but this is
false in dimension $\geq 3$. Caffarelli \cite{Caffarelli} established the J\"orgens-Calabi-Pogorelov theorem for generalized solutions (or viscosity solutions). Trudinger-Wang \cite{TW2} proved that the only convex open subset $\om$ of $\R^n$ which admits a convex $C^2$ solution of \eqref{eq:jorgens} in $\om$ with $\lim_{x\to\pa\om}u(x)=\infty$ is $\om=\R^n$. Caffarelli-Li \cite{CL} established the asymptotical behaviors of viscosity solutions of \eqref{eq:jorgens} outside of a bounded convex subset of $\R^n$ for $n\ge 2$ (the case $n=2$ was studied before in Ferrer-Mart\'inez-Mil\'an \cite{FMM2, FMM} using complex analysis), from which the J\"orgens-Calabi-Pogorelov theorem follows.

In this paper, we provide a new proof of this J\"orgens' theorem. Our arguments do not use complex analysis. This allows us to establish such Liouville type theorems for solutions of a class of degenerate Monge-Amp\`ere equations. More precisely, we classify entire convex solutions of the degenerate Monge-Amp\`ere equations
\be\label{eq:first}
\det \nabla^2 u(x_1,x_2)=|x_1|^{\al} \quad \mbox{in } \R^2,
\ee
where $\al>-1$. The equation \eqref{eq:first} appears, for instance, as a blowup limiting equation of
\be\label{eq:full}
\det\nabla^2 u(x_1,x_2)=(x_1^2+x_2^2)^{\al/2}
\ee
in Daskalopoulos-Savin \cite{DS} in the study of the Weyl problem with nonnegative Gauss curvature.

In 1916, Weyl \cite{W} posed the following problem: Given a Riemannian metric $g$ on the $2$-dimensional sphere $\mathbb{S}^2$ whose Gauss curvature is positive everywhere, does there exist a global $C^2$ isometric embedding $X: (\mathbb{S}^2, g)\to (\R^3, \ud s^2)$, where $\ud s^2$ is the standard flat metric on $\R^3$?

Lewy \cite{Lewy} solved the problem in the case that $g$ is real analytic. In 1953, Nirenberg \cite{Nirenberg} gave a solution to this problem under the regularity assumption that $g$ has continuous fourth order derivatives. The result was later extended to the case that $g$ has continuous third order derivatives by Heinz \cite{Heinz}. An entirely different approach was taken independently by Alexandrov and Pogorelov; see \cite{Alex, P1, P2}.

There are also work (see \cite{Iaia, GuanLi1, HZ, DS}) which study the problem with nonnegative Gauss curvature. Guan-Li \cite{GuanLi1} showed that for any $C^4$ metric on $\mathbb{S}^2$ with nonnegative Gauss curvature, there always exists a global $C^{1,1}$ isometric embedding into $(\R^3, \ud s^2)$; see also Hong-Zuily \cite{HZ} for a different approach to this $C^{1,1}$ embedding result. Guan and Li asked there that whether the $C^{1,1}$ isometric embeddings can be improved to be $C^{2,\gamma}$ or even $C^{2,1}$. The problem can be reduced to regularity properties of solutions of a Monge-Amp\`ere equation that becomes degenerate at the points where the Gauss curvature vanishes. If the Gauss curvature of $g$ only has one nondegenerate zero, the regularity of the isometric embedding amounts to studying the regularity of solutions of \eqref{eq:full} near the origin for $\alpha=2$, and it has been proved in Daskalopoulos-Savin \cite{DS} that the solutions of \eqref{eq:full} are $C^{2,\gamma}$ near the origin for $\al>0$.

A comprehensive introduction to the Weyl problem and related ones can be found in the monograph Han-Hong \cite{HanHong}.

The main result of this paper is the following:

\begin{thm}\label{thm:main}
Let $u$ be a convex generalized (or Alexandrov) solution of \eqref{eq:first} with $\al>-1$.
Then there exist some constants $a>0$, $b$ and a linear function $\ell(x_1,x_2)$ such that
\[
u(x_1,x_2)=\frac{a}{(\al+2)(\al+1)}|x_1|^{2+\al}+\frac{a b^2}{2}x_1^2 +bx_1x_2+ \frac{1}{2a} x_2^2+\ell(x_1,x_2).
\]
\end{thm}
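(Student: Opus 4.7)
The plan is to exploit the $x_2$-translation invariance of \eqref{eq:first} via a partial Legendre transform. Assuming for now that $u$ is strictly convex and of class $C^2$, introduce $p=u_{x_2}(x_1,x_2)$ and set
\[
\tilde u(x_1,p):=p\,x_2-u(x_1,x_2),
\]
with $x_2$ viewed as a function of $(x_1,p)$. A direct calculation gives $\tilde u_{pp}=1/u_{22}$, $\tilde u_{x_1p}=-u_{12}/u_{22}$, and $\tilde u_{x_1 x_1}=-\det(\nabla^2 u)/u_{22}$, so that \eqref{eq:first} becomes the \emph{linear} degenerate elliptic equation
\[
\tilde u_{x_1 x_1}+|x_1|^\al\,\tilde u_{pp}=0.
\]
Differentiating twice in $p$ shows that the positive function $w:=\tilde u_{pp}=1/u_{22}$ solves the very same linear equation on $\R^2$. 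Once $w$ is shown to be a positive constant $a>0$, we get $u_{22}\equiv 1/a$; integrating twice in $x_2$ gives $u=\tfrac{1}{2a}x_2^2+f(x_1)x_2+g(x_1)$, and plugging back into \eqref{eq:first} forces $f''(x_1)=0$ and $g''(x_1)=a(|x_1|^\al+b^2)$, which yields the exact expression claimed in the theorem after absorbing linear remainders into $\ell$.

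To prove that $w$ is constant I would use a change of coordinates designed to reveal a Liouville structure. Set $\eta:=\tfrac{2}{\al+2}\,\mathrm{sign}(x_1)\,|x_1|^{(\al+2)/2}$, which for $\al>-1$ is a $C^1$-diffeomorphism of $\R$ onto itself with $\ud\eta/\ud x_1=|x_1|^{\al/2}$. A direct computation shows that $v(\eta,p):=w(x_1(\eta),p)$ satisfies
\[
\nabla_{(\eta,p)}\cdot\bigl(|\eta|^{\gamma}\,\nabla_{(\eta,p)} v\bigr)=0, \qquad \gamma:=\frac{\al}{\al+2}\in(-1,1).
\]
Because $\gamma\in(-1,1)$, the weight $|\eta|^\gamma$ lies in the Muckenhoupt class $A_2(\R^2)$, so by the Fabes--Kenig--Serapioni theorem nonnegative weak solutions on a Euclidean ball $B_R$ satisfy Harnack's inequality with a constant independent of $R$. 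A standard Harnack-to-Liouville argument (apply Harnack on $B_R$, send $R\to\infty$, and compare $\sup v$ to $\inf v$) then forces the positive entire solution $v$, and hence $w=\tilde u_{pp}$, to be constant.

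The main obstacle is to rigorously carry out this program for a general convex Alexandrov solution. One needs: (i) strict convexity and $C^1$-regularity of $u$ so that the partial Legendre transform is a global bijection of $\R^2$ onto $\R^2$; and (ii) enough interior regularity of $u$ for the weighted weak formulation of the equations for $\tilde u$ and $w$ to fall within the scope of Fabes--Kenig--Serapioni. Point (i) should follow from Caffarelli-type strict convexity and $C^1$ arguments adapted to the Monge--Amp\`ere measure $|x_1|^\al\,\ud x$, which is doubling precisely because $\al>-1$, together with the observation that an entire convex solution cannot contain any line segment off $\{x_1=0\}$ (otherwise \eqref{eq:first} would fail on a set of positive Lebesgue measure). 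For (ii), away from $\{x_1=0\}$ the standard Caffarelli $C^{2,\gamma}$ theory yields classical smoothness since the right-hand side is smooth and bounded away from zero there; the subtler step, which I expect to be the heart of the proof, is transferring enough regularity across the degenerate line $\{x_1=0\}$ so that the equation for $w$ persists in the weighted weak sense required by the Harnack inequality.
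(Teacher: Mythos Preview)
Your overall strategy---partial Legendre transform, then a Liouville theorem for the resulting Grushin equation applied to $\tilde u_{pp}$---is exactly the paper's. Your route to Liouville is a genuine and attractive alternative: the substitution $\eta=\tfrac{2}{\al+2}\,\mathrm{sign}(x_1)|x_1|^{(\al+2)/2}$ does convert $w_{x_1x_1}+|x_1|^{\al}w_{pp}=0$ into $\nabla\cdot(|\eta|^{\gamma}\nabla v)=0$ with $\gamma=\al/(\al+2)\in(-1,1)$, so Fabes--Kenig--Serapioni applies. The paper instead invokes the Caffarelli--Guti\'errez Harnack inequality for linearized Monge--Amp\`ere (after checking a $\mu_\infty$ condition for $|x_1|^{\al}$), together with an anisotropic scaling $w_r(x_1,p)=r^{-1}w(r^{1/(2+\al)}x_1,r^{1/2}p)$, to reach the same Liouville conclusion; your approach trades that machinery for a direct $A_2$ argument.

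There is, however, a real gap in your point (i). You assert that strict convexity and $C^{1}$-regularity make $T(x_1,x_2)=(x_1,u_{x_2})$ a bijection of $\R^2$ \emph{onto} $\R^2$, and you then write that $w$ ``solves the very same linear equation on $\R^2$.'' Strict convexity and $C^1$ give only that $T$ is continuous and injective, hence that $T(\R^2)$ is open; they say nothing about surjectivity. Surjectivity amounts to $u_{x_2}(x_1,x_2)\to\pm\infty$ as $x_2\to\pm\infty$, which is false for generic strictly convex smooth functions (think of $\sqrt{1+x_2^2}$). If $T(\R^2)$ is only a strip $\R\times(\beta_0,\beta_1)$, no Liouville theorem is available for $w$. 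Ruling this out is an independent and essential step in the paper (its ``Step~2''): one observes that $u^*_p=x_2\to+\infty$ as $p\to\beta_1^-$, and then builds an explicit barrier for the Grushin operator on a rectangle abutting $\{p=\beta_1\}$ to force a contradiction via the maximum principle. Your proposal does not supply any argument for this, and the ingredients you list in (i) cannot produce one. (A smaller point: your sketch of strict convexity---``no line segment off $\{x_1=0\}$''---leaves open exactly the case that matters, a segment on $\{x_1=0\}$; the paper handles this by showing that the section $\{u<M\}$ is bounded via an Alexandrov-type comparison with the explicit solution $|x_1|^{2+\al}+x_2^2$.)
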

Recall that every generalized solution of \eqref{eq:jorgens} in an open subset of $\R^2$ is strictly convex (and thus, smooth). However, this is not the case for generalized (or even classical) solutions of $\det\nabla^2 u=|x_1|^\al$ when $\al>0$; see Example \ref{example}. And it follows from \cite{Caffarelli1} that the generalized solutions of such equations with homogenous boundary condition are strictly convex.

The paper is organized as follows. To illustrate our method, in Section \ref{alpha=0} we first present another proof of J\"orgens' theorem, which only makes use of a few properties of harmonic functions. Those properties also hold in general for solutions of elliptic or even certain degenerate elliptic equations, such as a Grushin type equation shown in Section \ref{sec:grushin} that the partial Legendre transform of $u$ satisfies. In Section \ref{sec:proof}, we show that entire solutions of \eqref{eq:first} are strictly convex and prove Theorem \ref{thm:main}.

\bigskip

\noindent\textbf{Acknowledgements:} Both authors thank Professor YanYan Li
for valuable suggestions and constant encouragement. The second author was supported in part by the First Class Postdoctoral
Science Foundation of China (No. 2012M520002).

\section{A new proof of J\"orgens' theorem}\label{alpha=0}

\begin{proof}[Proof of Theorem \ref{thm:main} when $\al=0$]
First of all, we know that $u$ is smooth. Define $T:\R^2\to \R^2$ by
\be\label{eq:the partial legendre t}
T(x_1,x_2)=(x_1, \nabla_{x_2} u(x))=:(p_1,p_2).
\ee
Clearly, $T$ is injective.
Recall that the partial Legendre transform $u^*(p)$ is defined as
\[
u^*(p)=x_2\nabla_{x_2} u(x)-u(x).
\]
Then
\begin{itemize}
\item $u^*$ is concave w.r.t. $p_1$ and convex w.r.t. $p_2$;

\item $(u^*)^*=u$;

\item $\Delta u^*=0$ in $T(\R^2)$.

\end{itemize}

\emph{Step 1}: Prove the theorem under the assumption $T(\R^2)=\R^2$.

\medskip
For simplicity, we will denote $\nabla_{x_i}u(x), \nabla_{p_i}u^*(p)$ as $u_i(x), u^*_i(p)$ respectively throughout the paper if there is no possibility of confusion. Since $u^*$ is convex w.r.t. $p_2$, we have
\[
u^*_{22}\geq 0\quad \mbox{and}\quad\Delta u^*_{22}=0 \quad\mbox{in}\quad \R^2.
\]
It follows from Liouville theorem for entire nonnegative harmonic functions that $u^*_{22}=a\geq 0$ for
some constant $a$. By the equation of $u^*$, we have $u^*_{11}=-a$.  Hence,
\[
u^*=(-p_1^2+p_2^2)a/2+bp_1p_2+\ell(p_1,p_2)
\]
for some constant $b$ and linear
function $\ell$. Since $u=(u^*)^*$, $a>0$ and we are done.

\medskip

\emph{Step 2}: Prove $T(\R^2)=\R^2$.

\medskip

We prove it by contradiction. Suppose that there exists $\bar{x}_1$ such that \[\lim\limits_{x_2\to +\infty}  u_2(\bar{x}_1,x_2):=\beta<+\infty.\] Claim: for any $x_1\in \R$, \[\lim\limits_{x_2\to +\infty} u_2(x_1,x_2)=\beta.\]

Indeed, by the convexity of $u$, for $t>0$
\[
u(\bar{x}_1,0)+t\beta\geq u(\bar{x}_1, t)\geq u(x_1, x_2)+u_1(x)(\bar{x}_1-x_1)+ u_2(x)(t-x_2),
\]
namely,
\[
u_2(x)(1-x_2/t) \leq \beta + \frac{1}{t}\{u(\bar{x}_1,0)- u(x_1, x_2)-u_1(x)(\bar{x}_1-x_1)\}.
\]
Sending $t\to \infty$, we have $u_2(x_1,x_2)\leq \beta$. Hence, $\lim\limits_{x_2\to +\infty}  u_2(x_1,x_2)\leq \beta$.
Repeating this argument with $x_1$ and $\bar{x}_1$ exchanged,  we would see that  $\lim\limits_{x_2\to +\infty} u_2(x_1,x_2)\geq \beta$.

Without loss of generality, we assume that $\beta=1$. Therefore,
\[
T(\R^2)=(-\infty,+\infty)\times (\beta_0,1)
\]
for some $-\infty\leq \beta_0<1$.
Since $T$ is one-to-one and $u^*_{2}(p_1,p_2)=x_2$, we have
\[
\lim_{p_2\to 1^-}u^*_{2}(p_1,p_2)=+\infty,
\]
 i.e., for any $C>2$, there exists $\va$ (may depend on $\bar p_1$ which is arbitrarily fixed) such that $u^*_{2}(\bar p_1,p_2)\ge C$ for every $p_2\ge 1-\va$. By continuity of $u^*_{2}$, $u^*_{2}(p_1,1-\va)\ge C-1$ for $p_1\in(\bar p_1-\delta, \bar p_1+\delta)$ for some small $\delta$. Since $u^*_{2}$ is monotone increasing in $p_2$, we have $u^*_{2}(p_1,p_2)\ge C-1$ in $(\bar p_1-\delta, \bar p_1+\delta)\times (1-\va,1)$. This shows that
 \[
 \lim_{(p_1,p_2)\to (\bar p_1,1)}u^*_2(p_1,p_2)=+\infty
 \]
 for any $\bar p_1\in \R$, and in particular, $u^*_2$ is positive near the point $(2,1)$. Without loss of generality, we may assume that $u^*_2$ is positive in $[1,3]\times [0,1)$. For any $C>0$ large, we let
 \[
 v(p_1,p_2):=u^*_2(p_1,p_2)-Cp_2(p_1-1)(3-p_1)-\frac{C}{3}p_2^3+\frac C3.
 \]
 Since $\Delta u^*_2=0$, it follows that $\Delta v=0$. By the maximum principle, $v\geq 0$ in $[1,3]\times [0,1)$. In particular, $v(2,\bar p_2)\ge 0$ where $\bar p_2\in (0,1)$ is chosen such that
 \[
 \bar p_2+\bar p_2^3/3-1/3=1/2.
 \] Hence, $u^*_2(2,\bar p_2)\ge C/2$ for all $C>0$, which is a contradiction.
\end{proof}

\section{Homogenous Grushin type equations}\label{sec:grushin}

Let $\om$ be a bounded domain in $\R^n$ with $C^2$ boundary $\pa \om$ such that $\om\cap\{x|x_1=0\}\neq\emptyset$. Consider
\be\label{eq:grushin}
Lu:=u_{x_1x_1}+|x_1|^\al u_{x_2x_2}=0\quad \mbox{in } \om,
\ee
where $\al>-1$. We will see later that the partial Legendre transform of solutions of \eqref{eq:first} satisfies \eqref{eq:grushin}. Also, \eqref{eq:grushin} appears in \cite{CS} in extension formulations for fractional Laplacian operators.
\begin{defn}
We say a function  $u$ is a strong
solution of \eqref{eq:grushin} if $u\in C^1(\om)\cap C^2(\om \setminus\{x_1=0\}) $ and satisfies
\[
Lu=0\quad \mbox{in } \om\setminus\{x_1=0\}.
\]
\end{defn}

In this following, we will see that our definition of strong solution coincides with the classical strong solutions.
Indeed, $u\in W^{2,p}_{loc}$ for any $1\leq p<-\frac{1}{\al}$ if $\al\in (-1,0)$, and $u$ is $C^{2,\delta}$ if $\al\geq 0$.
We have to be careful if we want to study continuous viscosity solutions of \eqref{eq:grushin} which may not have uniqueness property, see Remark 4.3 in \cite{CS}. However, $L^p$-viscosity solutions of certain elliptic equations with coefficients deteriorating along some lower dimensional manifolds would be such strong solutions, see, e.g., \cite{X}. The following proposition is in the same spirit of Lemma 4.2 in \cite{CS}. For regularity properties of solutions of a more general class of quasilinear degenerate elliptic equations we refer to \cite{Guan}.

\begin{prop} \label{prop:existence}
For any $g\in C(\pa \om)$, there exists a unique strong solution $u$ of \eqref{eq:grushin}
with $u\in C(\overline{\om})$ and $u=g$ on $\pa \om$. Furthermore, we have
\be\label{eq:mp}
\max_{\overline \om} u\leq \max_{\pa \om}g, \quad \min_{\overline \om} u\geq \min_{\pa \om}g,
\ee
and, for any $\om'\subset \subset \om$ and $k\in \mathbb{N}$,
\be\label{eq:estimate}
\sum_{l=0}^k\|\nabla^l_{x_2}u\|_{C^1(\om')}\leq C\|g\|_{C^0(\pa \om)},
\ee
where $C>0$ depends only on $n, \al, k, dist(\om',\pa \om)$.
\end{prop}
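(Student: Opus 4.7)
\emph{Proof plan.} The plan is to construct $u$ by approximating the degenerate coefficient $|x_1|^\al$ with a smooth uniformly elliptic one, obtain uniform estimates, extract a subsequential limit, and verify the required properties.

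First I consider the regularized operator $L_\va u := u_{x_1x_1}+(x_1^2+\va^2)^{\al/2}u_{x_2x_2}$, which is uniformly elliptic on $\om$ with smooth coefficients. Since $\pa\om$ is $C^2$, classical Perron/Schauder theory yields a unique solution $u_\va\in C(\overline{\om})\cap C^{2,\delta}_{\mathrm{loc}}(\om)$ of $L_\va u_\va=0$ in $\om$ with $u_\va=g$ on $\pa\om$, and the weak maximum principle for $L_\va$ gives $\min_{\pa\om} g \le u_\va \le \max_{\pa\om} g$ uniformly in $\va$.

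The key observation for the interior estimates \eqref{eq:estimate} is that the coefficients of $L_\va$ depend only on $x_1$, so differentiation in $x_2$ commutes with the operator and $\pa_{x_2}^k u_\va$ solves $L_\va v=0$ as well. A standard cutoff/difference-quotient maximum principle argument then gives uniform $L^\infty$ bounds on $\pa_{x_2}^k u_\va$ on every $\om'\subset\subset\om$. Interior Schauder theory on the subdomains $\{|x_1|\ge\tau>0\}$ provides $x_1$-derivative bounds uniformly in $\va$ there, and the equation $u_{\va,x_1x_1}=-(x_1^2+\va^2)^{\al/2}u_{\va,x_2x_2}$ combined with $\al>-1$ shows that $u_{\va,x_1}$ is uniformly $C^0$-bounded across $\{x_1=0\}$ (integrate the second derivative, noting $|x_1|^\al$ is locally integrable). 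Arzelà--Ascoli then extracts a subsequence $u_{\va_j}\to u\in C^1(\om)\cap C^2(\om\setminus\{x_1=0\})$ with $Lu=0$ off $\{x_1=0\}$ and inheriting \eqref{eq:estimate}.

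Continuity up to the boundary is obtained by barriers. At $x_0\in\pa\om$ with $x_{0,1}\ne 0$, classical uniformly elliptic barriers suffice, with constants uniform for small $\va$. At $x_0\in\pa\om\cap\{x_1=0\}$ I would exploit the scaling symmetry $(x_1,x_2)\mapsto(\lda x_1,\lda^{(\al+2)/2}x_2)$ of $L$ to construct a barrier of the form $w(x)=A\bigl(|x_1|^{\al+2}+c|x_2-x_{0,2}|^2\bigr)^\gamma$ for suitable $\gamma\in(0,1)$ and $c,A>0$, checking that $L_\va w\le 0$ and that $w$ dominates the oscillation of $g$ near $x_0$; since $\al>-1$ these bounds are uniform in $\va$. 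Together with the uniform $L^\infty$ bound this gives $u\in C(\overline\om)$ with $u=g$ on $\pa\om$, and \eqref{eq:mp} passes to the limit. Uniqueness follows from a maximum principle for strong solutions: if $u,v$ are two such solutions agreeing on $\pa\om$, then $w=u-v\in C(\overline\om)\cap C^2(\om\setminus\{x_1=0\})$ satisfies $Lw=0$ off $\{x_1=0\}$; applying the classical maximum principle on each uniformly elliptic slab $\om\cap\{\pm x_1>\tau\}$, sending $\tau\to 0^+$ using continuity of $w$ across $\{x_1=0\}$, and perturbing by $\pm\eta|x_1|^{\al+2}$ (which is a strict sub/supersolution since $(\al+1)(\al+2)>0$) to rule out interior maxima on the degenerate line, forces $w\equiv 0$. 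The hardest step will be the barrier construction at points of $\pa\om\cap\{x_1=0\}$ and establishing $C^1$ regularity of the limit across the degenerate line.
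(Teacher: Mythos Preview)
Your overall strategy—regularize the coefficient, solve the uniformly elliptic problems, obtain uniform interior estimates by exploiting that $\partial_{x_2}$ commutes with $L_\varepsilon$, integrate the equation to control $u_{x_1}$ across $\{x_1=0\}$, and pass to the limit—is exactly the paper's approach. The paper carries out the interior $x_2$-derivative bound concretely by showing $L_\varepsilon\bigl(\beta(u^\varepsilon)^2 + (\varphi u^\varepsilon_{x_2})^2\bigr) \ge 0$ for a cutoff $\varphi$ with $\varphi_{x_1}=0$ near $\{x_1=0\}$, then applying the maximum principle; your ``standard cutoff/difference-quotient'' phrase points at the same Bernstein-type argument. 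Your treatment of boundary continuity via barriers is in fact more thorough than what the paper records.

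There is, however, a genuine gap in your uniqueness argument. Perturbing by $\pm\eta|x_1|^{\alpha+2}$ does give a strict sub/super\-solution away from $\{x_1=0\}$, but this alone cannot rule out an interior maximum of $w=u-v$ on the degenerate line: on $\{x_1=0\}$ the perturbation vanishes, so the maximum of $w-\eta|x_1|^{\alpha+2}$ over $\overline{\om}$ may still be attained at a point of $\om\cap\{x_1=0\}$, and your slab argument only pushes the maximum onto $\partial\om\cup\{x_1=0\}$. Since $\alpha+2>1$, the function $|x_1|^{\alpha+2}$ is $C^1$ with zero gradient at $x_1=0$, so it has no feature there that conflicts with a $C^1$ function having a maximum. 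The paper instead perturbs by $\varepsilon|x_1|$: this satisfies $L(\cdot)=0$ off $\{x_1=0\}$, and at a putative interior maximum $\bar x$ with $\bar x_1=0$ the \emph{kink} of $|x_1|$ forces the one-sided $x_1$-derivatives of $v=u-\max_{\partial\om}g+\varepsilon|x_1|$ to satisfy $\partial_+v(\bar x)=u_{x_1}(\bar x)+\varepsilon>u_{x_1}(\bar x)-\varepsilon=\partial_-v(\bar x)$, which is incompatible with $\bar x$ being a maximum (one would need $\partial_+v\le 0\le\partial_-v$). This is precisely where the $C^1$ regularity in the definition of strong solution is used. Replace your perturbation by $\varepsilon|x_1|$ (or any function with a genuine corner at $x_1=0$ and $L\le 0$ off the line) and the uniqueness proof goes through.
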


\begin{proof}
\emph{Uniqueness.} Clearly, the uniqueness would follow from \eqref{eq:mp}. The proof of uniqueness in Lemma 4.2 in \cite{CS} can be applied to obtain \eqref{eq:mp} and we include it for completeness. Let $u$ be a strong solution of \eqref{eq:grushin} with $u\in C(\overline{\om})$ and $u=g$ on $\pa \om$. Let $v=u-\max_{\pa \om}g+\va|x_1|$, where $\va$ is small. Suppose $v$ has an interior maximum point $\bar x$ in $\om$. Then $\bar x_1=0$, since otherwise $v$ satisfies an elliptic equation near $\bar x$ which does not allow an interior maximum point. On the other hand, if $\bar x_1=0$, then $\bar x$ can not be a maximum point of $v$ since $\pa_+v(\bar x)>\pa_-v(\bar x)$. Therefore, the maximum of $v$ is achieved on $\pa \om$, i.e. $u-\max_{\pa \om}g+\va|x_1|\leq \va\mbox{ diam}(\om)$. Sending $\va\to 0$, we obtain $\max_{\overline \om} u\leq \max_{\pa \om}g$. Similarly, we can show that $\min_{\overline \om} u\geq \min_{\pa \om}g$.

\emph{Existence.} For $\va>0$ sufficiently small, let $0<\eta_\va(x_1)\in C^\infty(-\infty,\infty)$ such that
\[
\eta_\va(x_1)=|x_1|^{\al}\quad\mbox{for} \quad |x_1|>2\va;\quad  \eta_\va(x_1)=\va^{\al}\quad\mbox{for} \quad  |x_1|\leq \va.
\]
By the standard linear elliptic equation theory, there exists a unique solution $u^\va\in C(\overline \om)\cap  C^\infty(\om)$ of
\be\label{eq:app}
L_\va u^\va:=u^\va_{x_1x_1}+\eta_\va u^\va_{x_2x_2}=0\quad \mbox{in } \om,
\ee
and $u^\va=g$ on $\om$. By the maximum principle, we have $\sup_{\om}|u^\va|\leq \sup_{\pa \om} |g|$. We will establish proper uniform norms of $u^\va$ and obtain the desired solution by sending $\va\to 0$.

Our proof of this part is different from \cite{CS} which uses Caffarelli-Guti\'errez's Harnack inequality \cite{CG} to obtain uniform interior H\"older norms of those approximating solutions. Instead, we  establish an interior bound of $u^\va_{x_2}$ first, as in Daskalopoulos-Savin \cite{DS}.
In view of the standard uniformly elliptic equation theory, we only need to concern about the area near $\{x_1=0\}$.
Suppose that $0\in \om$ and $B_\tau \subset \om$ for some small $\tau>0$. We shall show
that $\|u^\va_{x_2}\|_{L^\infty(B_{\tau/2})}\leq C$ for some $C$ independent of $\va$.

We claim that there exists a large universal constant $\beta$ such that
\be\label{eq:diff ineq}
L_\va(\beta (u^\va)^2+ (\varphi u_{x_2}^\va)^2)\geq 0\quad \mbox{in }\om,
\ee
where $\varphi$ is some cutoff function in $B_\tau$ satisfying $\varphi=1$ in $B_{\tau/2}$,
$\varphi=0$ in $\om\setminus B_\tau$,  and $\varphi_{x_1}=0$ for all $|x_1|\leq \tau/4$.

Indeed, a simple computation yields
\[
L_\va(u^\va)^2 =2((u^\va_{x_1})^2+\eta_\va (u^\va_{x_2})^2)
\]
and
\[
\begin{split}
L_\va(\varphi u_{x_2}^\va)^2&=L_\va\varphi^2 (u_{x_2}^\va)^2+\varphi^2 L_\va(u_{x_2}^\va)^2+2(\varphi^2)_{x_1}  ((u_{x_2}^\va)^2)_{x_1}+2\eta_\va(\varphi^2)_{x_2}  ((u_{x_2}^\va)^2)_{x_2}\\&
=L_\va\varphi^2 (u_{x_2}^\va)^2 +2\varphi^2((u_{x_2x_1}^\va)^2+\eta_\va(u_{x_2x_2}^\va)^2)+8(\varphi_{x_1} u_{x_2}^\va)(\varphi u_{x_2x_1}^\va)\\&
\quad + 8\eta_\va(\varphi_{x_2}u^\va_{x_2})(\varphi u^\va_{x_2x_2}).
\end{split}
\]
Hence,
\[
\begin{split}
L_\va(\beta (u^\va)^2+ (\varphi u_{x_2}^\va)^2)\geq & 2\beta \eta_\va (u^\va_{x_2})^2+ 2\varphi^2((u_{x_2x_1}^\va)^2+\eta_\va(u_{x_2x_2}^\va)^2)\\&
+L_\va\varphi^2 (u_{x_2}^\va)^2+8(\varphi_{x_1} u_{x_2}^\va)(\varphi u_{x_2x_1}^\va)
+ 8\eta_\va(\varphi_{x_2}u^\va_{x_2})(\varphi u^\va_{x_2x_2}).
\end{split}
\]
By the Cauchy inequality and the facts
\[
L_\va(\varphi^2)\geq -C_1\eta_\va, \quad |\varphi_{x_1} u^\va_{x_2} |\leq C_1\eta_\va|u^\va_{x_2}|,
\]
the claim follows for large $\beta$ independent of $\va$.

By \eqref{eq:diff ineq} and the maximum principle, we have
\[
\sup_{B_{\tau/2}}|u^\va_{x_2}|\leq \beta^{1/2} \sup_{\om}|u^\va|.
\]

Since $Lu^\va_{x_2}=0$, the same arguments can be applied inductively to show that $\pa^k u^\va/\pa{x_2^k}$
are bounded in the interior of $\om$ for any $k\in \mathbb{Z}^+$. Since  $|u^\va_{x_2x_2}|\leq C$ for
some $C$ independent of $\va$ and $u^\va_{x_1x_1}+\eta_\va u^\va_{x_2x_2}=0$, we have
\[
|u^\va_{x_1}|\leq C \int_{-1}^{1}\eta_\va(x_1)\,\ud x_1+C,
\]
where we used the fact that $u^\va_{x_1}$ is bounded uniformly for $B_{3\tau/4}\cap\{x||x_1|\geq \tau/4\}$. Since $\al>-1$,
the integral  $ \int_{-1}^{1}\eta_\va(x_1)\,\ud x_1$ can be bounded independent of $\va$. The same arguments
would show that $u^\va_{x_1x_2}$ and $u^\va_{x_1x_2x_2}$ are bounded as well.

For $\al\in (-1,0)$ and any point $\bar{x}=(\bar{x}_1,\bar{x}_2)\in B_{\tau/4}$, by the Taylor's formula we have
\[
\begin{split}
&u^\va(x_1,\bar{x}_2)\\&=u^\va(\bar{x}_1,\bar{x}_2)+u^\va_{x_1}(\bar{x}_1,\bar{x}_2)(x_1-\bar{x}_1)+
(x_1-\bar{x}_1)^2\int_0^1(1-\lda)u^\va_{x_1x_1}(\xi_\lda,\bar{x}_2)\,\ud \lda \\ &
=u^\va(\bar{x}_1,\bar{x}_2)+u^\va_{x_1}(\bar{x}_1,\bar{x}_2)(x_1-\bar{x}_1)
-(x_1-\bar{x}_1)^2\int_0^1(1-\lda)u^\va_{x_2x_2}(\xi_\lda,\bar{x}_2)\eta(\xi_\lda)\,\ud \lda\\&
=u^\va(\bar{x}_1,\bar{x}_2)+u^\va_{x_1}(\bar{x}_1,\bar{x}_2)(x_1-\bar{x}_1)
-u^\va_{x_2x_2}(\bar{x}_1,\bar{x}_2)(x_1-\bar{x}_1)^2\int_0^1(1-\lda)\eta(\xi_\lda)\,\ud \lda\\&
\quad +O(|x_1-\bar{x}_1|^3\int_0^1\eta(\xi_\lda)\,\ud \lda),
\end{split}
\]
where $\xi_\lda=\bar{x}_1+\lda(x_1-\bar{x}_1)$.  One should note that $\int_0^1\eta(\xi_\lda)\,\ud \lda\leq C|x_1-\bar{x}_1|^{\al}$
for some constant $C>0$ independent of $\va$.
Making use of Taylor's formula again, we have
\[
\begin{split}
u^{\va}(x_1,x_2)=&u^\va(x_1,\bar{x}_2)+u^\va_{x_2}(x_1,\bar{x}_2)(x_2-\bar{x}_2)+\frac12 u^\va_{x_2x_2}(\bar{x}_1,\bar{x}_2)(x_2-\bar{x}_2)^2\\
&+O(|x_2-\bar{x}_2|^3+|(x_1-\bar{x}_1)(x_2-\bar{x}_2)^2|),
\end{split}
\]
and
\[
u^\va_{x_2}(x_1,\bar{x}_2)= u^\va_{x_2}(\bar{x}_1,\bar{x}_2)+u^{\va}_{x_1x_2}(\bar{x}_1,\bar{x}_2)(x_1-\bar{x}_1)+O(|x_1-\bar{x}_1|^{2+\al}).
\]
Therefore,
\[
| u^{\va}(x_1,x_2) -u^\va(x_1,\bar{x}_2)-u^\va_{x_1}(\bar{x}_1,\bar{x}_2)(x_1-\bar{x}_1)-
u^\va_{x_2}(\bar{x}_1,\bar{x}_2)(x_2-\bar{x}_2)|\leq C|x-\bar{x}|^{2+\al}.
\]
By the arbitrary choice of $\bar{x}$, we conclude that
\be\label{eq:al small}
\|u^\va\|_{C^{1,1+\al}(B_{\tau/4})}\leq C.
\ee
The same argument is also applicable to $\al\ge 0$, and one can conclude that
\be\label{eq:al large}
\|u^\va\|_{C^{2,\delta}(B_{\tau/4})}\leq C
\ee
for some $\delta>0$ depending only on $\al$.

By passing to a subsequence, we obtain a strong solution $u$ of \eqref{eq:grushin} and $u$ satisfies \eqref{eq:estimate}.
\end{proof}

\begin{rem}\label{rem:regu}
From the proof of Proposition \ref{prop:existence}, we see that:
\begin{itemize}
\item If $\al\in(-1,0)$, $u\in C^{1,1+\al}_{loc}(\om)$;
\item If $\al\geq 0$, $u\in C^{2,\delta}_{loc}(\om)$ for some $\delta>0$ depending only on $\al$.
\end{itemize}
\end{rem}

\bigskip

Let
\be\label{eq:standard}
\phi(x_1,x_2)=|x_1|^{2+\al}+x_2^2 \quad \mbox{in }\R^2.
\ee
Then
\[
\nabla^2 \phi=\left(
                \begin{array}{cc}
                  (2+\al)(1+\al)|x_1|^\al & 0 \\
                  0 & 2 \\
                \end{array}
              \right),
\]
and
\[
(\nabla^2 \phi)^{1/2}=\left(
                \begin{array}{cc}
                  \sqrt{(2+\al)(1+\al)}|x_1|^{\al/2} & 0 \\
                  0 & \sqrt{2} \\
                \end{array}
              \right).
\]
Hence, $\det \nabla^2 \phi= c(\al)|x_1|^\al$, where $c(\al)=2(\al+2)(\al+1)>0$.
For any $x\in \R^2$ and $t>0$, denote
\[
S(x,t)=S_\phi(x,t)=\{y\in \R^2| \phi(y)<\ell(y)+t\},
\]
where $\ell(y)$ is the support plane of $\phi$ at $(x,\phi(x))$. It is direct to verify

\medskip

\noindent \textbf{Condition $\mu_\infty$} \cite{CG}: \emph{For any given $\delta_1\in (0,1)$,
there exists $\delta_2\in (0,1)$ such that, for all sections $S$ and all small subsets $E\subset S$,
\be\label{condition}
\frac{|E|}{|S|}<\delta_2\quad \mbox{implies} \quad \frac{\int_E |x_1|^\al\,\ud x}{\int_S |x_1|^\al\,\ud x}<\delta_1.
\ee
}

Let
\[
A(x_1,x_2)= \left(
                \begin{array}{cc}
                  |x_1|^{-\al} & 0 \\
                  0 & 1 \\
                \end{array}
              \right).
\]
Clearly,
\[
B:=(\nabla^2 \phi)^{1/2}A(\nabla^2 \phi)^{1/2}=\left(
                \begin{array}{cc}
                 (2+\al)(1+\al) & 0 \\
                  0 & 2 \\
                \end{array}
              \right),
\]
which is positive definite if $\al >-1$. Therefore, we can apply Caffarelli-Guti\'errez's Harnack inequality \cite{CG} to obtain the following proposition.

\begin{prop}\label{prop:harnack} Let $u\geq 0$ be a strong solution of
\[
Lu=0 \quad \mbox{in } S(x_0,2),
\]
where $x_0$ is an arbitrary point in $\R^2$.
Then there exists a positive constant $\beta$ depending only on $\al$ such that
\[
\sup_{S(x_0,1)} u\leq \beta \inf_{S(x_0,1)} u.
\]
\end{prop}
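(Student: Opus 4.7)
The plan is to reduce the proposition to a direct application of the Harnack inequality of Caffarelli-Guti\'errez \cite{CG}, whose two structural hypotheses have essentially been checked in the paragraphs preceding the statement; the only genuine work is to handle the fact that $u$ is only a strong solution across $\{x_1=0\}$.

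First, off the set $\{x_1=0\}$ the equation $Lu=0$ is equivalent to $\operatorname{tr}(A\nabla^2 u)=0$ with $A(x)=\operatorname{diag}(|x_1|^{-\alpha},1)$, and the computation in the excerpt shows that
\[
B=(\nabla^2\phi)^{1/2}A(\nabla^2\phi)^{1/2}=\operatorname{diag}\bigl((2+\alpha)(1+\alpha),\,2\bigr)
\]
is a constant, positive definite matrix when $\alpha>-1$. This is precisely the normalization condition of Caffarelli-Guti\'errez, with structural constants depending only on $\alpha$. Second, since $d\mu_\phi=c(\alpha)|x_1|^\alpha\,dx$, Condition $\mu_\infty$ for this measure against sections of $\phi$ reduces, via the anisotropic scaling $(x_1,x_2)\mapsto(\lambda x_1,\lambda^{(2+\alpha)/2}x_2)$ under which $\phi$ is homogeneous of degree $2+\alpha$ and $|x_1|^\alpha\,dx$ rescales by a single power of $\lambda$, to an $A_\infty$-type estimate on a single normalized section, which is an elementary check.

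The subtlety is that the Caffarelli-Guti\'errez theorem is formulated for classical or viscosity solutions, whereas our strong solution $u$ is a priori only $C^1$ across $\{x_1=0\}$. To bridge this I would approximate $u$ on a smooth subdomain $\om'$ with $S(x_0,1)\subset\om'\subset\subset S(x_0,2)$ by the smooth solutions $u^\va$ of the uniformly elliptic equations $L_\va u^\va=0$ from the proof of Proposition \ref{prop:existence}, with boundary data $u|_{\partial\om'}$; the maximum principle \eqref{eq:mp} preserves nonnegativity. Replacing $\phi$ by the smooth convex potential $\phi_\va$ with $\nabla^2\phi_\va=\operatorname{diag}\bigl((2+\alpha)(1+\alpha)\eta_\va(x_1),\,2\bigr)$ keeps both Caffarelli-Guti\'errez hypotheses true with constants independent of $\va$, so their Harnack inequality applied to each $u^\va$ on the sections $S_{\phi_\va}(x_0,1)$ yields a uniform Harnack constant $\beta=\beta(\alpha)$. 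Passing $\va\to 0$ using the $C^{1,1+\alpha}$ or $C^{2,\delta}$ convergence from Remark \ref{rem:regu} and the convergence $S_{\phi_\va}(x_0,t)\to S_\phi(x_0,t)$ then yields the stated Harnack inequality for $u$ on $S(x_0,1)$.

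The main obstacle will be the $\va$-uniformity of Condition $\mu_\infty$ for $\phi_\va$ and the control on the limiting section geometry near $\{x_1=0\}$; once these bookkeeping points are resolved, the proposition follows immediately from the normalization computation already displayed in the excerpt.
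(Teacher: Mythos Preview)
Your proposal is correct and follows the same approach as the paper: verify Condition $\mu_\infty$ and the normalization $B=(\nabla^2\phi)^{1/2}A(\nabla^2\phi)^{1/2}$ is uniformly positive definite, then invoke Caffarelli--Guti\'errez \cite{CG}. The paper's entire argument is the sentence ``Therefore, we can apply Caffarelli--Guti\'errez's Harnack inequality'' immediately after displaying $B$; your additional $\va$-approximation step to pass from strong solutions to the classical setting is a point the paper simply does not address.
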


\begin{cor}\label{cor:holder}
Let $u$ be a strong solution of
\[
Lu=0 \quad \mbox{in } S(0,2).
\]
Then there exist constants $C>0$ and $\gamma\in (0,1)$
depending only on $\al$ such that
\[
\|u\|_{C^\gamma(S(0,1))}\leq C\|u\|_{L^\infty(S(0,2))}.
\]
\end{cor}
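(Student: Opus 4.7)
I would derive the corollary from Proposition~\ref{prop:harnack} by the classical Moser--Krylov--Safonov oscillation-decay argument, carried out in the section geometry of $\phi(x)=|x_1|^{2+\al}+x_2^2$ rather than in Euclidean balls. For any $x_0\in S(0,1)$ and $t>0$ small enough that $S(x_0,2t)\subset S(0,2)$, set $M(t):=\sup_{S(x_0,t)}u$, $m(t):=\inf_{S(x_0,t)}u$ and $\w(t):=M(t)-m(t)$. Both $u-m(2t)$ and $M(2t)-u$ are non-negative strong solutions of $Lu=0$ on $S(x_0,2t)$. Applying Proposition~\ref{prop:harnack} (in its scale-invariant version, obtained from the stated unit-scale statement via the exact scaling $T_\lda(x_1,x_2)=(\lda x_1,\lda^{(2+\al)/2}x_2)$ of $\phi$ together with translation in $x_2$, both of which preserve the class of strong solutions of $Lu=0$) to each of these functions yields
\[
M(t)-m(2t)\le\beta\,(m(t)-m(2t)),\qquad M(2t)-m(t)\le\beta\,(M(2t)-M(t)),
\]
and adding these and rearranging produces the dyadic oscillation decay $\w(t)\le\theta\,\w(2t)$ with $\theta:=(\beta-1)/(\beta+1)\in(0,1)$.

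Iterating this estimate on the scales $2^{-k}t_0$ yields $\w(2^{-k}t_0)\le 2\theta^{k}\|u\|_{L^\infty(S(0,2))}$, which with $\gamma_0:=-\log_2\theta>0$ is equivalent to the power decay
\[
\mathrm{osc}_{S(x_0,t)}u\le C\,t^{\gamma_0}\|u\|_{L^\infty(S(0,2))}
\]
uniform in $x_0\in S(0,1)$ and small $t>0$. This already shows that $u$ is H\"older continuous on $S(0,1)$ with respect to the Caffarelli--Guti\'errez quasi-distance generated by the sections of $\phi$.

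To transfer this section-H\"older estimate to an ordinary Euclidean one, I would use the explicit form of $\phi$: a direct computation, handling the on-axis case $x_1=0$ and the off-axis case $x_1\neq 0$ separately, shows that if $x,y\in S(0,1)$ and $|x-y|$ is small enough, then $y\in S(x,C|x-y|^{\sigma})$ with $\sigma:=\min(2,2+\al)>0$ and $C$ depending only on $\al$. Combining this inclusion with the previous step yields
\[
|u(x)-u(y)|\le C|x-y|^{\sigma\gamma_0}\|u\|_{L^\infty(S(0,2))},
\]
which is the corollary with $\gamma:=\sigma\gamma_0\in(0,1)$.

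\textbf{Main obstacle.} The oscillation-decay and iteration steps are routine once Proposition~\ref{prop:harnack} is in hand. The genuinely delicate point is the last step: sections of $\phi$ degenerate anisotropically near $\{x_1=0\}$ (they are elongated in $x_1$ for $\al>0$ and in $x_2$ for $\al\in(-1,0)$), so the section-to-Euclidean comparison must be verified uniformly over reference points $x_0\in S(0,1)$, and the resulting H\"older exponent $\gamma$ depends quantitatively on $\al$.
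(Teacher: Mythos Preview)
Your proposal is correct and is precisely the argument the paper leaves implicit: the corollary is stated without proof, as a standard consequence of the Harnack inequality of Proposition~\ref{prop:harnack} (itself a quotation of Caffarelli--Guti\'errez). Your oscillation-decay derivation, together with the scaling $T_\lambda$ and the explicit comparison between sections of $\phi$ and Euclidean balls, is exactly how one makes this precise; the only cosmetic point is that $\gamma:=\sigma\gamma_0$ need not lie in $(0,1)$ a priori, but one may of course replace it by any smaller positive exponent.
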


\begin{thm}\label{thm:1} Let $u$ be a nonnegative strong solution of
\be\label{eq: to liou}
Lu=0\quad \mbox{in }\R^2.
\ee
Then $u$ is a constant in $\R^2$.
\end{thm}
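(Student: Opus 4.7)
The plan is to run a standard Moser-style Liouville argument, combining the Harnack inequality from Proposition \ref{prop:harnack} with the anisotropic scaling symmetry of $L$ and of the family of sections $\{S(0,r)\}_{r>0}$.

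The key preliminary observation is that the operator $L$ and the function $\phi(x)=|x_1|^{2+\alpha}+x_2^2$ are both homogeneous under the anisotropic dilations $T_r:(x_1,x_2)\mapsto (r^{1/(2+\alpha)}x_1,\,r^{1/2}x_2)$. A direct computation gives $\phi(T_r x)=r\phi(x)$, so $T_r$ maps $S(0,s)$ onto $S(0,rs)$, and
\[
L(u\circ T_r)=r^{2/(2+\alpha)}\,(Lu)\circ T_r,
\]
so $L$ annihilates $u\circ T_r$ whenever it annihilates $u$. Applying Proposition \ref{prop:harnack} to the rescaled solution $u\circ T_r$ on $S(0,2)$ therefore yields the scaled Harnack inequality
\[
\sup_{S(0,r)} w \leq \beta \inf_{S(0,r)} w
\]
for every nonnegative strong solution $w$ of $Lw=0$ on $S(0,2r)$, with the same universal constant $\beta=\beta(\alpha)$.

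With this in hand, the proof proceeds in two short steps. First, I would apply the scaled Harnack on $S(0,2r)$ directly to the globally nonnegative solution $u$ and use $\inf_{S(0,r)} u\leq u(0)$ to conclude $\sup_{S(0,r)} u \leq \beta\,u(0)$ for every $r>0$; letting $r\to\infty$, $u$ is uniformly bounded on $\mathbb{R}^2$ by $\beta\,u(0)$. Second, setting $M_r:=\sup_{S(0,r)} u$ and $m_r:=\inf_{S(0,r)} u$, the functions $M_r-u$ and $u-m_r$ are nonnegative strong solutions of $Lw=0$ on $S(0,r)$. Applying the scaled Harnack to each of them on $S(0,r/2)$ and adding the two resulting inequalities yields the standard oscillation-decay estimate
\[
M_{r/2}-m_{r/2} \leq \theta\,(M_r-m_r), \qquad \theta:=\frac{\beta-1}{\beta+1}<1.
\]
Iterating this bound with $r=2^n r_0$ and invoking the uniform bound $M_r-m_r\leq \beta\,u(0)$ from the first step, we deduce $M_{r_0}-m_{r_0}\leq \theta^n\beta\,u(0)\to 0$ as $n\to\infty$. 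Hence $u$ is constant on every $S(0,r_0)$, and therefore on all of $\mathbb{R}^2$.

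There is no genuine analytic obstacle in this plan; the one point that requires checking is that the constant $\beta$ in Proposition \ref{prop:harnack} really does persist under the conjugation by $T_r$. This is automatic from the joint covariance of $L$ and of the family of sections under $T_r$: the hypothesis of the Harnack inequality on $S(0,2)$ translates verbatim to the original hypothesis on $S(0,2r)$, with the same constant.
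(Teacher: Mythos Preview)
Your proposal is correct and is essentially the paper's own argument: both exploit the anisotropic dilation $T_r$ to transfer the Harnack inequality of Proposition~\ref{prop:harnack} to sections $S(0,r)$ of arbitrary scale, and then let $r\to\infty$. The only presentational difference is that the paper first records the resulting H\"older estimate as Corollary~\ref{cor:holder} and uses it to make $|u(x)-u(y)|$ vanish for any fixed $x,y$ after rescaling, whereas you carry out the equivalent Moser oscillation-decay iteration directly without isolating the H\"older step.
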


\begin{proof}
Consider the scaling $u_r=\frac{1}{r}u(r^{1/(2+\al)}x_1, r^{1/2}x_2)$ for $r>0$.
Then $u_r$ also satisfies \eqref{eq: to liou}. By Proposition \ref{prop:harnack}, we have
\[
\sup_{S(0,2)} u_r \leq \beta u_r(0).
\]
It follows from Corollary \ref{cor:holder} that
\[
[u_r]_{C^\gamma(S(0,1))}\leq C \beta u_r(0).
\]
For any two distinct points $x,y$ in $\R^2$, we have, for sufficiently large $r$,
\[
\begin{split}
|u(x)-u(y)|&=r|u_r(r^{-1/(2+\al)}x_1, r^{-1/2}x_2)-u_r(r^{-1/(2+\al)}y_1, r^{-1/2}y_2)|\\&
\leq r[u_r]_{C^\gamma(S(0,1))}|r^{-2/(2+\al)}(x_1-y_1)^2+r^{-1}(x_2-y_2)^2|^{\gamma/2}\\ &
\leq C \beta u(0) |r^{-2/(2+\al)}(x_1-y_1)^2+r^{-1}(x_2-y_2)^2|^{\gamma/2}.
\end{split}
\]
Sending $r\to \infty$, we obtain $u(x)=u(y)$. The proof is completed.
\end{proof}

\section{Regularity for solutions of degenerate Monge-Amp\`ere equations}\label{sec:proof}

Define the measure $\mu_\al$ in $\R^2$ as $\ud\mu_\al= |x_1|^{\al}\ud x_1\ud x_2$ for $\al>-1$.
For any bounded open convex set $\om\subset \R^2$, it is clear that the measure $\mu_\al$
has the \emph{doubling property} in $\om$, i.e., there exists a constant $c_\al>0$, depending only on $\al$ and $\om$, such that
for  any $(\bar x_1,\bar x_2)\in \om$ and any ellipsoids $E\subset \R^2$ centered at origin with $(\bar x_1,\bar x_2)+E\in \om$ there holds
\be\label{doubling}
\mu_\al((\bar x_1,\bar x_2)+E)\geq c_\al \mu_\al(((\bar x_1,\bar x_2)+2E)\cap \om).
\ee
Consequently, we have the following theorem.

\begin{thm} \label{thm:reg} Let $\om$ be an open convex set in $\R^2$, and $u$ be the generalized solution of
\[
\det\nabla^2 u(x)=|x_1|^\al \quad \mbox{in } \om,
\]
with $u=0$ on $\pa \om$. Then $u$ is strictly convex in $\om$, $u\in C^{1,\delta}_{loc}(\om)$ for some $\delta>0$ depending only on $\al$.
Furthermore, the partial Legendre transform $u^*$ of $u$ is a strong solution of
\[
Lu^*=0\quad \mbox{in }T(\om),
\]
where the map $T$ is given in \eqref{eq:the partial legendre t}.
\end{thm}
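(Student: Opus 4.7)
The plan is to prove the theorem in three steps: strict convexity of $u$, $C^{1,\delta}$ interior regularity, and identification of the equation satisfied by the partial Legendre transform $u^*$.

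First, the doubling property \eqref{doubling} for the measure $\mu_\al$ is exactly the hypothesis needed to apply Caffarelli's strict convexity theorem \cite{Caffarelli1} for generalized solutions of Monge-Amp\`ere equations with homogeneous boundary data, so I would invoke it directly to conclude that $u$ is strictly convex in $\om$. Once strict convexity is in hand, interior sections of $u$ are compactly contained in $\om$, and standard Monge-Amp\`ere regularity theory under the doubling condition (Caffarelli \cite{Caffarelli1}, Caffarelli-Guti\'errez \cite{CG}) yields $u \in C^{1,\delta}_{\mathrm{loc}}(\om)$ for some $\delta > 0$ depending only on $\al$.

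Next I would set up the partial Legendre transform. Strict convexity of $u$ forces $x_2 \mapsto u(x_1, x_2)$ to be strictly convex on every vertical slice of $\om$, so $u_{x_2}(x_1, \cdot)$ is strictly increasing; combined with $u \in C^1(\om)$, this makes the map $T$ of \eqref{eq:the partial legendre t} continuous and injective on $\om$, hence a homeomorphism onto the open set $T(\om)$ by Brouwer's invariance of domain. Setting $u^*(p) = x_2 p_2 - u(x)$ with $x = T^{-1}(p)$, the chain rule yields $u^*_{p_1}(p) = -u_{x_1}(T^{-1}(p))$ and $u^*_{p_2}(p) = x_2$, both continuous, so $u^* \in C^1(T(\om))$.

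On $\om \setminus \{x_1 = 0\}$ the right-hand side $|x_1|^\al$ is smooth and positive, so the strictly convex solution $u$ is $C^\infty$ there by Caffarelli's interior $C^{2,\gamma}$ theory followed by Schauder bootstrap; hence $T$ is a smooth diffeomorphism off that line and $u^* \in C^2(T(\om) \setminus \{p_1 = 0\})$. The standard second-derivative identities for the partial Legendre transform,
\[
u^*_{p_2 p_2} = \frac{1}{u_{x_2 x_2}}, \qquad u^*_{p_1 p_1} = -\frac{\det \nabla^2 u}{u_{x_2 x_2}} = -|x_1|^\al u^*_{p_2 p_2},
\]
together with $x_1 = p_1$, then give $Lu^* = u^*_{p_1 p_1} + |p_1|^\al u^*_{p_2 p_2} = 0$ off $\{p_1 = 0\}$, which is exactly the definition of $u^*$ being a strong solution in the sense of Section \ref{sec:grushin}. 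I expect the main obstacle to be the first step: Caffarelli's strict convexity and $C^{1,\delta}$ theory for generalized solutions require careful handling of the degenerate measure $\mu_\al$ through the doubling condition \eqref{doubling}; once strict convexity is secured, the remainder is invariance of domain plus a routine chain-rule calculation on the smooth open set $\om \setminus \{x_1 = 0\}$.
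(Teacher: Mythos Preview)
Your proposal is correct, and the first two steps coincide with the paper's: both simply cite Caffarelli \cite{Caffarelli0,Caffarelli1} (via the doubling property \eqref{doubling}) for strict convexity and interior $C^{1,\delta}$ regularity, and both observe that $T$ is a homeomorphism onto an open set.

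The genuine difference is in how the strong-solution property of $u^*$ is established. You argue directly: away from $\{x_1=0\}$ the right-hand side is smooth and positive, so the strictly convex solution $u$ is $C^\infty$ there by Caffarelli's $C^{2,\gamma}$ theory and bootstrap; then the chain-rule identities $u^*_{p_2p_2}=1/u_{x_2x_2}$ and $u^*_{p_1p_1}=-(\det\nabla^2 u)/u_{x_2x_2}$ immediately give $Lu^*=0$ off $\{p_1=0\}$, and the $C^1$ continuity of $u^*$ across $\{p_1=0\}$ comes from $u\in C^1$ and continuity of $T^{-1}$. The paper instead runs an approximation: it regularizes $|x_1|^\alpha$ to the smooth positive $\eta_{1/k}$, solves the uniformly elliptic Monge--Amp\`ere problems \eqref{eq:amae} for $u_k$, observes that the partial Legendre transforms $u_k^*$ satisfy the approximate Grushin equations \eqref{eq:app}, and then invokes the uniform interior estimates already developed in the proof of Proposition~\ref{prop:existence} to pass to the limit. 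Your route is shorter and avoids any approximation machinery; the paper's route has the advantage that the compactness estimates of Proposition~\ref{prop:existence} are already in hand and simultaneously yield the finer regularity of $u^*$ near $\{p_1=0\}$ recorded in Remark~\ref{rem:regu}, which your direct argument does not automatically give.
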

\begin{proof}
The strict convexity and the $C^{1,\delta}$ regularity was proved in \cite{Caffarelli0, Caffarelli1}. Hence, $T$ is continuous and one-to-one, and thus, $T(\om)$ is open. Let $u_k\in C(\overline \om)\cap C^\infty(\om)$
be the solution of
\be\label{eq:amae}
\det\nabla^2 u_k=\eta_{1/k}(x_1)\quad \mbox{in }\om
\ee
with $u_k=0$ on $\pa \om$, where $\eta_{1/k}(x_1)$ is the same as the one in the proof of Proposition \ref{prop:existence} with $\va=1/k$. Let
\[
T_{k}: \om \to \R^2, \quad (x_1,x_2)\mapsto (x_1, \pa_2u_k(x)),
\]
and $u^*_k$ be the partial Legendre transform of $u_k$.
Then $u^*_k$ satisfies \eqref{eq:app}.
Clearly, up to a subsequence, $u_k\to u$ in $C^{1}_{loc}(\om)$ as $k\to\infty$. Thus, $\lim_{k\to \infty}T_{k}(x)= T(x)$ for any $x\in \om$, and for any $y\in T(\om)$ there exists $\lda$ sufficiently small such that $B_\lda(y)\subset T(\om)\cap T_k(\om)$ for every large $k$.  By the same argument used in proof of Proposition \ref{prop:existence}, we can conclude that
$u^*\in C^1(T(\om))\cap C^2(T(\om)\setminus \{x_1=0\})$ and satisfies $L u^*=0$ in $T(\om)\setminus \{x_1=0\}$.
\end{proof}

\begin{thm} \label{prop:sc2}
Let $u$ be a generalized solution of \eqref{eq:first}. Then $u$ is strictly convex.
\end{thm}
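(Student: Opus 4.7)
I would argue by contradiction. Suppose $u$ is not strictly convex, so there exist a point $x_0$ and an affine support $\ell$ of $u$ at $x_0$ with contact set $W := \{u = \ell\}$ containing more than one point. Subtracting $\ell$, I may take $\ell \equiv 0$, so $u \geq 0$ on $\R^2$ and $u = 0$ on the closed convex set $W$.

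The core of the plan is to show $W$ contains a full affine line $L$. First, $W$ has empty two-dimensional interior, for otherwise $u$ would be affine on a set of positive Lebesgue measure and the Monge-Amp\`ere measure $\mu_u$ would vanish there, contradicting $\mu_u = |x_1|^\al\,dx$ with $|x_1|^\al > 0$ a.e. Thus $W$ is at most one-dimensional. Next, by applying Caffarelli's strict convexity theorem \cite{Caffarelli1} on bounded convex subdomains contained in $\R^2 \setminus \{x_1 = 0\}$, on which $|x_1|^\al$ is bounded between two positive constants, I conclude that any extreme point of $W$ lying in $\{x_1 \ne 0\}$ would give a contradiction with Caffarelli's interior extreme-point dichotomy; hence every extreme point of $W$ lies on $\{x_1 = 0\}$. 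Then $W$ cannot be bounded either: if it were, the recession cone of $W$ would be trivial, so $u^\infty(v) > 0$ for every $v \neq 0$, each section $S_t := \{u < t\}$ would be a bounded convex open set, and applying Theorem~\ref{thm:reg} to $u - t$ on $S_t$ (which has zero boundary values and solves the same Monge-Amp\`ere equation) would give strict convexity of $u$ in $S_t$, contradicting the fact that the nontrivial set $W$ sits inside $S_t$. Combining these, $W$ is one-dimensional and unbounded with its extreme points confined to $\{x_1=0\}$; a further analysis near any putative endpoint rules out the half-line configuration and forces $W$ to contain a full affine line $L$ in some direction $v \ne 0$.

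With $W \supset L$ in hand, the contradiction is immediate. For any $p \in \partial u(\R^2)$ there is some $c$ with $u(y) \geq p\cdot y + c$ for all $y$; restricting to $y \in L$ (where $u = 0$) and letting the line parameter go to $\pm\infty$ forces $p\cdot v = 0$. Hence $\partial u(\R^2) \subset v^\perp$, a line in $\R^2$ of zero two-dimensional Lebesgue measure. On the other hand, the Monge-Amp\`ere equation yields
\[
|\partial u(\R^2)| = \int_{\R^2}|x_1|^\al \, dx = +\infty,
\]
a contradiction. Hence $u$ is strictly convex. The most delicate step, and the main obstacle I anticipate, is upgrading a half-line of non-strict convexity whose endpoint lies on the degenerate line $\{x_1 = 0\}$ to a full line: the degeneracy of the right-hand side along $\{x_1 = 0\}$ precisely prevents Caffarelli's interior extreme-point dichotomy from directly closing this sub-case, so a genuinely global argument combining the equation structure along the putative half-line with a careful use of Theorem~\ref{thm:reg} on suitable tilted sections is required.
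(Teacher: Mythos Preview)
Your reduction is sound up to the point where you have shown that the contact set $W$ is contained in $\{x_1=0\}$ and is unbounded (so $W$ is a half-line or the full $x_2$-axis), and your full-line contradiction via $\partial u(\R^2)\subset v^\perp$ is correct. The genuine gap is exactly the one you flag: you give no argument to exclude the half-line configuration, and your suggestion of applying Theorem~\ref{thm:reg} on ``tilted sections'' is not developed. This is not a cosmetic omission. The endpoint of the putative half-line sits on the degenerate set $\{x_1=0\}$, so neither the two-dimensional strict-convexity result for $\det\nabla^2 u\ge c_0>0$ nor Caffarelli's extreme-point dichotomy yields any information there, and the unboundedness of $W$ blocks a direct appeal to Theorem~\ref{thm:reg}. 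Without a concrete mechanism, the argument does not close.

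The paper handles this by a quantitative comparison argument that bypasses the half-line/full-line dichotomy altogether. After normalizing so that $u\ge 0$ and $u(0)=0$, one shows $\min_{\partial B_R}u>0$ for some large $R$. If not, strict convexity away from $\{x_1=0\}$ forces, say, $u(Re_2)=0$ for arbitrarily large $R$. Convexity then traps $u\le M:=\max_{\partial B_1}u$ on the triangle with base $\{(x_1,0):|x_1|\le 1\}$ and apex $Re_2$; this triangle contains an ellipse of size roughly $1\times R$. Under the anisotropic rescaling $u_R(y_1,y_2)=R^{-1}u(y_1,R(y_2+1/4))$ one still has $\det\nabla^2 u_R=|y_1|^\alpha$ on a fixed ball, while $0\le u_R\le M/R$ there. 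Comparing with the explicit barrier $\sqrt{c(\alpha)^{-1}}(\phi-\tau)$, where $\phi(y)=|y_1|^{2+\alpha}+y_2^2$ and $c(\alpha)=2(\alpha+2)(\alpha+1)$, forces $R\le \sqrt{c(\alpha)}\,M/\tau$, a contradiction. This comparison with the homogeneous model solution is the missing idea; once $\min_{\partial B_R}u>0$, the section $\{u<\min_{\partial B_R}u\}$ is bounded and Theorem~\ref{thm:reg} gives strict convexity exactly as in your bounded-$W$ step.
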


\begin{proof}  By the two dimensional Monge-Amp\`ere equation theory, if $u$ is a generalized solution of
\[
\det \nabla^2 u\geq c_0>0 \quad \mbox{in }\om,
\]
where $\om$ is an open set in $\R^2$,
then $u$ is locally strictly convex in $\om$. Hence, we only need to consider the situation $\al >0$.
After subtracting a supporting plane of $u$ at origin, we may assume that
\[
 u\geq 0\quad \mbox{in }\R^2 \quad \mbox{and } u(0)=0.
\]

\medskip

Claim: There exists a sufficiently large $R>0$ such that
\be \label{eq:stc}
\min_{\pa B_R} u>0.
\ee

\medskip

Indeed, if not, namely, $\min_{\pa B_R} u=0$ for all sufficiently large $R>0$.  The strict convexity of $u$ away from $\{x_1=0\}$ implies $u(Re_2)=0$ or $u(-Re_2)=0$, where $e_2=(0,1)$.
Without loss of generality, we may assume $u(Re_2)=0$. Let
\[
M=\max_{\pa B_1} u>0,
\]
 and $\Delta$ be the triangle generated by the segment $\{(x_1,0)||x_1|\leq 1\}$ and the point $Re_2$. By the convexity of $u$, we have
\[
M\geq u \quad \mbox{in }\Delta.
\]
It is clear that the ellipsoid
\[
E=\{(x_1,x_2):  x_1^2+\frac{1}{R^2}(x_2-R/4)^2=\frac{1}{16}\}
\]
sits in $\Delta$. Let
\[
u_R(y_1,y_2)=\frac{1}{R}u(y_1, R(y_2+1/4)).
\]
We have
\[
\det \nabla^2 u_R(y_1,y_2)=|y_1|^\al \quad \mbox{in }B_{1/4},
\]
and $u_R\leq \frac{M}{R}$ in $B_{1/4}$. Choosing a small constant $\tau>0$, depending only on $\al$, such that
\[
S_\phi(0,\tau)\subset B_{1/4},
\]
where $\phi$ is given in \eqref{eq:standard}. By the comparison principle (see, e.g., \cite{G}),
\[
0\leq u_R\leq \sqrt{c(\al)^{-1}}(\phi-\tau)+\max_{\pa S(0,\tau)}u_R \quad\mbox{in }S_\phi(0,\tau),
\]
where $c(\al)=2(\al+2)(\al+1)$. In particular,
\[0\leq - \sqrt{c(\al)^{-1}}\tau +\max_{\pa S(0,\tau)}u_R \leq -\sqrt{c(\al)^{-1}}\tau+M/R.
\]
That is
\[
R\leq \frac{\sqrt{c(\al)}M}{\tau},
\]
which contradicts to the assumption that $R$ can be arbitrarily large.

Thus, \eqref{eq:stc} holds and we can conclude Theorem \ref{prop:sc2} from Theorem \ref{thm:reg}.
\end{proof}

One might ask if every solution of
\[
\det\nabla^2 u=|x_1|^\al\quad\mbox{in }B_1\subset \R^2
\]
is strictly convex, where $\al>0$. The following example shows that this is not the case.

\begin{ex}\label{example}
It is clear that for every $\al>0$ there always exists a positive convex smooth solution $w$ of the ODE
\be
\begin{split}
\begin{cases}
\frac{\al(\al+2)}{4}w(t)w(t)''-\frac{(\al+2)^2}{4}(w'(t))^2=1,\\
w(0)=1,\\
w'(0)=1,
\end{cases}
\end{split}
\ee
near $t=0$. Then $u=|x_1|^{\frac{\al+2}{2}}w(x_2)$ is a generalized solutions of $\det \nabla^2 u=|x_1|^{\al}$ in a small open set in $\R^2$. But $u$ is not strictly convex (is smooth for certain $\al$, though). By proper scaling and translation we can make the equation holds in $B_1$.

\end{ex}

\begin{proof}[Proof of Theorem \ref{thm:main}] Let $u$ be a generalized solution of \eqref{eq:first}. It follows from Theorem \ref{prop:sc2} that $u$ is strictly convex, and hence $u$ is smooth away from $\{x_1=0\}$. By Theorem \ref{thm:reg}, we know that $u\in C^{1,\delta}_{loc}(\R^2)$ and the partial Legendre transform $u^*$ of $u$ is a strong solution of
\be\label{eq:le}
Lu^*=u^*_{11}+|p_1|^\al u^*_{22}=0\quad \mbox{in } T(\R^2),
\ee
where $u^*_{ii}=u^*_{p_ip_i}$ and $T(x_1,x_2)=(x_1,u_{x_2}(x_1,x_2))=(p_1,p_2)$. Moreover, $T$ is continuous and one-to-one.

Given Theorem \ref{thm:1} and Proposition \ref{prop:harnack}, the rest of the proof is similar to that in Section \ref{alpha=0} for $\al=0$.

\medskip

\emph{Step 1}: Prove the theorem under the assumption: $T(\R^2)=\R^2$.

\medskip

Since $u^*$ is convex with respect to $p_2$, we have that $u^*_{22}\ge 0$. Note that $Lu^*_{22}=0$ in $\R^2$. By Theorem \ref{thm:1}, $u^*_{22}\equiv a$ for some nonnegative constant $a$. By the equation $Lu^*=0$, we have $u^*_{11}=-a|p_1|^\al$. Hence, $u^*_{121}\equiv u^*_{122}\equiv 0$ in $\{p_1>0\}$. Consequently, $u^*_{12}\equiv b$ in $\{p_1>0\}$ for some constant $b$. It follows from calculus that
\be\label{eq:entire function}
u^*=-\frac{a}{(\al+1)(\al+2)}|p_1|^{2+\al}+\frac {a}{2} p_2^2+bp_1p_2+\ell(p_1,p_2)
\ee
for some linear function $\ell$ in $\{p_1>0\}$. The same argument applies to $\{p_1<0\}$. Since $u^*, u^*_{2}\in C^1(\R^2)$, \eqref{eq:entire function} holds for all $p\in \R^2$. Since $u=(u^*)^*$, $a>0$ and we are done.

\medskip

\emph{Step 2}: Prove: $T(\R^2)=\R^2$.

\medskip

We prove it by contradiction. Suppose that there exists $\bar{x}_1$ such that $\lim\limits_{x_2\to \infty}  u_2(\bar{x}_1,x_2):=\beta_2<\infty$.
Then, as in Section \ref{alpha=0}, $\lim\limits_{x_2\to \infty}  u_2(x_1,x_2)=\beta$ for every $x_1\in \R$, and we may assume $\beta=1$. Therefore,  $T(\R^2)=(-\infty,\infty)\times (\beta_0,1)$ for some $-\infty\leq \beta_0<1$. Since $T$ is one-to-one and $u^*_{2}(p_1,p_2)=x_2$, we have $\lim_{p_2\to 1^-}u^*_{2}(p_1,p_2)=\infty$. The same argument in Section \ref{alpha=0} shows that
\[
\lim_{(p_1,p_2)\to (\bar p_1,1)}u^*_2(p_1,p_2)=+\infty
\]
for any $\bar p_1\in \R$.

\medskip

Case 1: $\alpha\ge 0$.

\medskip

Without loss of generality, we may assume that $u^*_2$ is positive in $[1,3]\times [0,1)$. For any $C>0$ large, we let
\[
v(p_1,p_2):=u^*_2(p_1,p_2)-Cp_2(p_1-1)(3-p_1)-\frac{C}{3}p_2^3+\frac C3.
\]
It is direct to check that $Lv<0$ in $[1,3]\times [0,1)$. By the maximum principle, $v\geq 0$ in $[1,3]\times [0,1)$. In particular, $v(2,\bar p_2)\ge 0$ where $\bar p_2\in (0,1)$ is chosen such that
\[
\bar p_2+\bar p_2^3/3-1/3=1/2.
\]
Hence, $u^*_2(2,\bar p_2)\ge C/2$ for all $C>0$, which is a contradiction.

\medskip

Case 2: $\alpha\in (-1,0)$.

\medskip

Without loss of generality, we may assume that $u^*_2$ is positive in $[1/2,1]\times [0,1)$. For any $C>0$ large, we let
\[
v(p_1,p_2):=u^*_2(p_1,p_2)-Cp_2(p_1-1/2)(1-p_1)-\frac{C}{3}p_2^3+\frac C3.
\]
It is direct to check that $Lv<0$ in $[1,3]\times [0,1)$. By the maximum principle, $v\geq 0$ in $[1/2,1]\times [0,1)$. In particular, $v(3/4,\bar p_2)\ge 0$ where $\bar p_2\in (0,1)$ is chosen such that
\[
\bar p_2/16+\bar p_2^3/3-1/3=1/32.
\]
Hence, $u^*_2(3/4,\bar p_2)\ge C/32$ for all $C>0$, which is a contradiction.

\medskip

The proof is completed.
\end{proof}

\small

\bigskip

\noindent T. Jin

\noindent Department of Mathematics, The University of Chicago\\
5734 S. University Avenue, Chicago, IL 60637, USA\\[1mm]
Email: \textsf{tj@math.uchicago.edu}

\medskip

\noindent J. Xiong

\noindent Beijing International Center for Mathematical Research, Peking University\\
Beijing 100871, China\\[1mm]
Email: \textsf{jxiong@math.pku.edu.cn}

\end{document}